\numberwithin{equation}{section}
\newcommand{\beq}{\begin{equation}}
\newcommand{\eeq}{\end{equation}}
\newcommand{\ben}{\begin{eqnarray}}
\newcommand{\een}{\end{eqnarray}}
\newcommand{\beno}{\begin{eqnarray*}}
\newcommand{\eeno}{\end{eqnarray*}}
\newtheorem{theorem}{Theorem}[section]
\newtheorem{lemma}[theorem]{Lemma}
\begin{document}
\begin{CJK*}{UTF8}{gkai}
\title[A note on eigenvalues] {A note on eigenvalues of a class of  singular continuous and discrete linear Hamiltonian systems}

\author{Hao Zhu}
\address{Chern Institute of Mathematics, Nankai University, Tianjin 300071, P. R. China}
\email{haozhu@nankai.edu.cn}

\maketitle

\begin{abstract}
 In this paper,  we show that the analytic and geometric multiplicities of an eigenvalue of a class of singular linear Hamiltonian systems  are equal, where both  endpoints are  in the limit circle cases. The proof is fundamental and is given for both  continuous and discrete   Hamiltonian systems. The  method used in this paper also works for both endpoints are regular, or  one endpoint is regular and the other is in the limit circle case.
\end{abstract}

\section{Introduction}
Consider the following singular continuous linear Hamiltonian system:
\begin{align}\label{singular continuous linear Hamiltonian system}
Jy'(t)=(P(t)+\lambda W(t))y(t),\;t\in(a,b),
\end{align}
where both  endpoints  $a$ and $b$ are  in the limit circle cases,
 $W(t)$ and $P(t)$ are $2m\times2m$   Hermitian matrices for any $t\in(a,b)$, $W,P\in L_{loc}(a,b)$, $W\geq 0$ on $(a,b)$,  and
\[
J=%
\begin{pmatrix}
0 & -I_m\\
I_m & 0
\end{pmatrix}.
\]
Throughout this paper, for the singular  continuous linear Hamiltonian system, we assume that

{\it (A) For each subinterval $(a',b')\subset(a,b)$, if $y$ satisfies $Jy'-Py=W\hat f$ and $Wy=0$ on $(a',b')$ for some $\hat f\in \hat L^2_{W}(a',b')$, then $y=0$ on $(a',b')$,}\\
where $\hat L_{W}^2(a',b')$ is defined in (\ref{pre-space in continuous case}). If (A) holds, then, for all $\lambda\in \mathbb{C}$ and for all nontrivial solutions $y$ of system
(\ref{singular continuous linear Hamiltonian system}), the following definiteness condition can be verified \cite{Atkinson1964}:
\begin{align}\label{definiteness condition conitinuous}
\int_{a'}^{b'}y^*(s)W(s)y(s)ds>0,
\end{align}
where $y^*$ is the complex conjugate transpose of $y$.

Consider the following singular discrete linear Hamiltonian system:
\begin{align}\label{singular discrete linear Hamiltonian system}
J\Delta y(n)=(P(n)+\lambda W(n))R(y)(n),\;n\in\mathbb{Z},
\end{align}
where $\mathbb{Z}$ is the set of integers, $\pm\infty$  are in the limit circle cases,
$\Delta y(n)=y(n+1)-y(n)$ is the forward difference operator,  $R(y)(n)=(u^T(n+1),v^T(n))^T$ is the partial right shift operator  with $y(n)=(u^T(n),v^T(n))^T$ and $u(n),v(n)\in\mathbb{C}^m$,
$W(n)={\rm diag}\{W_1(n),W_2(n)\}$ with $W_i(n)\geq0$, $i=1,2$, and
\[P(n)=\begin{pmatrix}-C(n)&A^*(n)\\A(n)&B(n)\end{pmatrix}\]
 are $2m\times2m$ Hermitian matrices with $m\times m$ matrices $A(n)$, $B(n)$ and $C(n)$.

To ensure the existence and uniqueness of the solution of the initial value problem for (\ref{singular discrete linear Hamiltonian system}), in this paper we assume that

{\it ($B_1$) $I_m-A(n)$ is invertible for any $n\in\mathbb{Z}$.}

In the discrete case, the analogous condition to (A) does not always hold \cite{Ren-Shi2011}. So we just assume the definiteness condition, which is similar to (\ref{definiteness condition conitinuous}), but requires conditions on both half lines to adjust the self-adjoint extension as in  \cite{Ren-Shi2014}:

{\it ($B_2$) There exist two finite sequences $\{n\}_{n=s_0^l}^{t_0^l}$  and $\{n\}_{n=s_0^r}^{t_0^r}$ such that $t_0^l\leq0$, $s_0^r\geq 1$ and for any $\lambda\in\mathbb{C}$, every non-trivial solution $y$ of (\ref{singular discrete linear Hamiltonian system}) satisfies
\begin{align*}
\sum_{n=s_0^l}^{t_0^l}R(y)^*(n)W(n)R(y)(n)>0,\;\;\sum_{n=s_0^r}^{t_0^r}R(y)^*(n)W(n)R(y)(n)>0.
\end{align*}}

The spectrum of the singular Hamiltonian system (\ref{singular continuous linear Hamiltonian system})  (or (\ref{singular discrete linear Hamiltonian system})) with a self-adjoint boundary condition consists of  discrete eigenvalues. These eigenvalues can be regarded as zeros of a specified entire function, and the analytic multiplicity of an eigenvalue is referred to its order as a zero of the function.
It renders lots of information in studying eigenvalue problems. However, people may pay more attention to  geometric multiplicity of the eigenvalue in application because it provides the number of eigenfunctions corresponding to the eigenvalue. So, it is nature to ask which of the two multiplicities is larger.

In the literature,  relationship of  analytic and geometric multiplicities of an eigenvalue of the boundary value problem  has been studied  by many researchers.
 Eastham {\it et al.} showed equivalence of the multiplicities of an eigenvalue of continuous
  Sturm-Liouville problems (briefly, SLPs) with  coupled boundary conditions \cite{Eastham1999}, while  Kong {\it et al.}
showed the same result  for separated boundary conditions \cite{Kong2000}. For the singular case,  Kong {\it et al.} proved this equivalence for continuous SLPs with limit circle  endpoints \cite{Kong2004}. Wang and Wu gave an alternative proof depending on the geometric structure of the space of self-adjoint boundary conditions \cite{Wang2005}. We proved the same relationship for continuous high-dimensional SLPs \cite{Hu-Liu-Wu-Zhu2018}. For higher-order ordinary differential equations, we refer the readers to \cite{Naimark1968,Shi2010}.
For  discrete SLPs,
we proved the equivalence of the multiplicities of an eigenvalue in \cite{Zhu2016}. For  continuous linear Hamiltonian systems or  SLPs, the equivalence of multiplicities of an eigenvalue can also be deduced by using Hill-type formula \cite{Hu-Wang2011,Hu-Wang20161,Hu-Wang20162}.  In this paper,
motivated by \cite{Hu-Liu-Wu-Zhu2018,Naimark1968}, we provide a  fundamental way to show  that the two multiplicities of an eigenvalue   are equal for both singular continuous and discrete  linear Hamiltonian systems with limit circle endpoints.
 Though our framework is based on both endpoints  in the limit circle cases, the  method used in this paper also works for both endpoints are regular, or  one endpoint is regular and the other is in the limit circle case.

The rest of this paper is organized as follows. Self-adjoint boundary conditions and  basic properties  of eigenvalues are given in Section 2. Equalities of  analytic and geometric multiplicities of an eigenvalue of the singular  linear Hamiltonian system in continuous and discrete cases are proved in Sections 3 and 4, respectively.

\section{Self-adjoint boundary conditions and  multiplicity of eigenvalues}

\subsection{Continuous case}
Define the space
\begin{align}\label{pre-space in continuous case}
\hat L_W^2(a,b)=\{\hat f:\int_{a}^b\hat f^*(t)W(t)\hat f(t)dt<\infty\}
\end{align}
 with the semi-inner product
$\langle \hat f,\hat g\rangle_c=\int_a^b\hat g^*(t)W(t)\hat f(t)dt$ and
\begin{align*}
L_W^2(a,b)=\hat L_W^2(a,b)/\{\hat y\in \hat L_W^2(a,b):\|\hat y\|_{c}=0\}.
\end{align*}

The maximal, pre-minimal and minimal operators  corresponding to (\ref{singular continuous linear Hamiltonian system}) are defined as follows:
\begin{align*}D(H)=&\{y\in L^2_W(a,b): \textrm{there exists } \hat y\in y \textrm{ such that } \hat y\in AC_{loc}(a,b) \textrm{ and } \mathcal{L}(\hat y)(t)=W(t)\hat f(t) \\
&\textrm{on }t\in(a,b) \textrm{ for some } f\in L_W^2(a,b) \textrm{ and for  any }\hat f\in f\},\\
Hy=&f,\\
D(H_{00})=&\{y\in D(H): \textrm{there exists } \hat y\in y \textrm{ such that }\hat y \textrm{ has a compact support in }(a,b) \textrm{ and}\\
 &\mathcal{L}(\hat y)(t)=W(t)\hat f(t) \textrm{ on } t\in(a,b) \textrm{ for some }f\in L^2_W(a,b) \textrm{ and for any }\hat f\in f\},\\
H_{00}y=&Hy,\;H_0y=\overline H_{00}y,
\end{align*}
where $AC_{loc}(a,b)$ is the set of functions which are locally absolutely continuous on $(a,b)$, and
$\mathcal{L}(\hat y)(t)=J\hat y'(t)-P(t)\hat y(t).$  We know by (A) that $H$ and $H_{00}$ are well-defined  and for any $y\in D(H)$, there exists a unique $\hat y\in y$ such that
 $\mathcal{L}(\hat y)(t)=W(t)\hat f(t)$ for $t\in (a,b)$. So we briefly identify $y$ and $\hat y$ below when $y\in D(H)$.
Thus
\[(f,g)_c(t):=g^*(t)Jf(t)\]
is well-defined for $f,g\in D(H)$.
It follows from \cite{Sun-Shi2010} that
\begin{align*}
\int_{a'}^{b'}\{g^*(t)\mathcal{L}(f)(t)-\mathcal{L}(g)^*(t)f(t)\}dt=(f,g)_c(t)|_{a'}^{b'}
\end{align*}
for any $a<a'<b'<b$, which yields that $(f,g)_c(a):=\lim_{t\to a^+}(f,g)_c(t)$ and $(f,g)_c(b):=\lim_{t\to b^-}(f,g)_c(t)$ exist and are finite for any $f,g\in D(H)$.

Let (\ref{singular continuous linear Hamiltonian system}) be in the limit circle cases at both endpoints $a$ and $b$.
The definition of limit circle endpoint depends on  defect indices of the left and right minimal operators, and we refer to Definition 3.1 in \cite{Shi2004}. Then we know again from \cite{Shi2004} that (\ref{singular continuous linear Hamiltonian system}) has $2m$ linearly independent solutions in $\hat L^2_W(a,b)$ for every $\lambda\in \mathbb{C}$.
Let $\phi_{i,\lambda}\in \hat L^2_W(a,b), 1\leq i\leq 2m$, be the linearly independent solutions of (\ref{singular continuous linear Hamiltonian system})  satisfying the initial condition:
\begin{align*}
(\phi_{1,\lambda}(c_0),\cdots, \phi_{2m,\lambda}(c_0))=I_{2m},
\end{align*}
where  $c_0$ is a fixed point in $(a,b)$. Denote $\phi_i=\phi_{i,0}$, $1\leq i\leq 2m$. For any $t\in[a,b]$, we define
\begin{align*}
\Phi_{\lambda}(t):=\begin{pmatrix}(\phi_{1,\lambda},\phi_1)_c(t)&\cdots&(\phi_{2m,\lambda},\phi_{1})_c(t)\\
\vdots&&\vdots\\(\phi_{1,\lambda},\phi_{2m})_c(t)&\cdots&(\phi_{2m,\lambda},\phi_{2m})_c(t)
\end{pmatrix}.
\end{align*}
For a fixed $t\in[a,b]$ and any $1\leq i,j\leq 2m$, $(\phi_{i,\lambda},\phi_j)_c(t)$ is an entire function of $\lambda$. Then the self-adjoint extension of $H_0$  is stated as follows.

\begin{lemma} \label{self-adjoint extension continuous case}
Assume that (\ref{singular continuous linear Hamiltonian system}) is in the limit circle cases at both endpoints $a$ and $b$. Then $D\subset D(H)$ is a self-adjoint extension domain of $H_0$ if and only if there exist $2m\times2m$ matrices $M$ and $N$ such that
${\rm rank}(M,N)=2m, MJM^*=NJN^*$, and
\begin{align}\label{boundary condition continuous}
D=\left\{y\in D(H):M\begin{pmatrix}(y,\phi_1)_c(a)\\\vdots\\(y,\phi_{2m})_c(a)\end{pmatrix}-
N\begin{pmatrix}(y,\phi_1)_c(b)\\\vdots\\(y,\phi_{2m})_c(b)\end{pmatrix}=0\right\}.
\end{align}
\end{lemma}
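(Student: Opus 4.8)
\noindent\emph{Plan of proof.}\ The plan is to realize Lemma~\ref{self-adjoint extension continuous case} as a concrete instance of the Glazman--Krein--Naimark description of self-adjoint extensions \cite{Naimark1968}, the abstract boundary form being made explicit through Green's formula and the conservation law for the $\la=0$ system. First I would recall from \cite{Shi2004,Sun-Shi2010} that, under (A), $H_{00}$ is symmetric with $\overline{H_{00}}=H_0$ and $H_0^{*}=H$, and that (\ref{singular continuous linear Hamiltonian system}) has $2m$ linearly independent solutions in $\hat L^2_W(a,b)$ for every $\la\in\mathbb{C}$; by the definiteness condition (\ref{definiteness condition conitinuous}) these remain independent in $L^2_W(a,b)$, so the deficiency indices of $H_0$ both equal $2m$, self-adjoint extensions exist, and $\dim\big(D(H)/D(H_0)\big)=4m$. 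The GKN theorem then states: a subspace $D$ with $D(H_0)\subseteq D\subseteq D(H)$ is a self-adjoint extension domain of $H_0$ if and only if $D=D^{[\perp]}$ with respect to the boundary form $[f,g]:=\langle Hf,g\rangle_c-\langle f,Hg\rangle_c$ on $D(H)$, in which case necessarily $\dim\big(D/D(H_0)\big)=2m$.

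The second step is to compute $[\cdot,\cdot]$ explicitly. Green's formula, already recalled above from \cite{Sun-Shi2010}, gives $[f,g]=(f,g)_c(b)-(f,g)_c(a)$ for $f,g\in D(H)$, the limits existing by the discussion preceding Lemma~\ref{self-adjoint extension continuous case}. The crux is the identity
\[ (f,g)_c(a)=\mathbf{g}_a^{*}\,J\,\mathbf{f}_a,\qquad (f,g)_c(b)=\mathbf{g}_b^{*}\,J\,\mathbf{f}_b,\qquad f,g\in D(H), \]
where $\mathbf{f}_a:=\big((f,\phi_1)_c(a),\dots,(f,\phi_{2m})_c(a)\big)^{T}$ and $\mathbf{f}_b,\mathbf{g}_a,\mathbf{g}_b$ are defined analogously. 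I would deduce it from two facts: (i) for the $\la=0$ system the brackets $(\phi_i,\phi_j)_c(t)$ are independent of $t$, equal to $J_{ji}$ (Green's formula applied to two solutions, together with $(\phi_1(c_0),\dots,\phi_{2m}(c_0))=I_{2m}$); and (ii) the limit circle description of the maximal and minimal domains from \cite{Shi2004}: on a deleted neighbourhood of $a$ the minimal domain equals $\{h:(h,\phi_i)_c(a)=0,\ 1\le i\le 2m\}$, its members are null for $(\cdot,\cdot)_c(a)$ against every element of $D(H)$, and the $2m$-dimensional solution space of $\mathcal{L}(u)=0$ there maps bijectively onto $\mathbb{C}^{2m}$ via $u\mapsto\mathbf{u}_a$. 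Hence $(f,g)_c(a)$ depends only on $\mathbf{f}_a$ and $\mathbf{g}_a$; taking solutions $u_f=\sum_k c_k\phi_k$, $u_g=\sum_l e_l\phi_l$ near $a$ that realize these vectors (possible by the bijectivity in (ii)), fact (i) gives $\mathbf{f}_a=Jc$, $\mathbf{g}_a=Je$ and $(u_f,u_g)_c(a)=e^{*}Jc$, and then $(J^{-1})^{*}=J$ yields $(f,g)_c(a)=\mathbf{g}_a^{*}J\mathbf{f}_a$. The endpoint $b$ is handled identically.

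Now introduce the boundary map $\Gamma\colon D(H)\to\mathbb{C}^{4m}$, $\Gamma y:=(\mathbf{y}_a,\mathbf{y}_b)$. By the second step $[y,z]=\mathbf{z}_b^{*}J\mathbf{y}_b-\mathbf{z}_a^{*}J\mathbf{y}_a$ for $y,z\in D(H)$, so $\ker\Gamma\subseteq\{y\in D(H):[y,z]=0\ \forall z\in D(H)\}=D(H_0)$, the last equality being the general fact for the closed symmetric $H_0$ with $H_0^{*}=H$; conversely the boundary brackets vanish on $D(H_0)$, so $\ker\Gamma=D(H_0)$. Therefore $\dim\mathrm{Im}\,\Gamma=\dim\big(D(H)/D(H_0)\big)=4m$, i.e.\ $\Gamma$ is onto $\mathbb{C}^{4m}$, and it carries $[\cdot,\cdot]$ to the non-degenerate skew-Hermitian form $\omega(x,y)=y^{*}\,\mathrm{diag}(-J,J)\,x$. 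It remains to run the finite-dimensional linear algebra: a $2m$-dimensional subspace $L\subseteq\mathbb{C}^{4m}$ is exactly a set $\{(\xi,\eta):M\xi-N\eta=0\}$ for some $M,N$ with $\mathrm{rank}(M,N)=2m$ (write $L=\ker(M,-N)$), and since $L^{[\perp]}=\mathrm{diag}(-J,J)^{-1}\,\mathrm{Col}\big((M,-N)^{*}\big)$ also has dimension $2m$, one has $L=L^{[\perp]}$ iff $(M,-N)\,\mathrm{diag}(-J,J)^{-1}\,(M,-N)^{*}=0$ iff $MJM^{*}=NJN^{*}$ (using $J^{-1}=-J$). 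Pulling back through $\Gamma$---legitimate since $D\supseteq D(H_0)=\ker\Gamma$ forces $D=\Gamma^{-1}(\Gamma D)$---converts ``$\Gamma(D)$ is self-dual'' into precisely the description (\ref{boundary condition continuous}) with $\mathrm{rank}(M,N)=2m$ and $MJM^{*}=NJN^{*}$; conversely each such $(M,N)$ yields a self-dual $L$ and hence a self-adjoint extension domain $D=\Gamma^{-1}(L)$.

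I expect the main obstacle to be fact (ii) of the second step, i.e.\ the endpoint identity in the genuinely singular case: one must carefully invoke the limit circle analysis of \cite{Shi2004,Sun-Shi2010} to split the maximal domain near an endpoint into solutions of $\mathcal{L}(u)=0$ plus the minimal domain, to identify that minimal domain with $\{h:\mathbf{h}_a=0\}$, and to know its members are null for $(\cdot,\cdot)_c(a)$. Once this is in place, the reduction to solutions via (i) and the concluding symplectic linear-algebra step are routine.
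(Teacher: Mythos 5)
The paper gives no proof of this lemma at all; it simply cites Theorem 5.6 of \cite{Sun-Shi2010}, whose proof is precisely the GKN argument you outline, so your proposal is essentially a correct reconstruction of the cited proof rather than a different route. Your key computations check out --- the constancy $(\phi_i,\phi_j)_c(t)=J_{ji}$ giving $(f,g)_c(a)=\mathbf{g}_a^*J\mathbf{f}_a$, the identification $\ker\Gamma=D(H_0)$ with surjectivity onto $\mathbb{C}^{4m}$, and the reduction of self-duality to $MJM^*=NJN^*$ via $(M,-N)\,\mathrm{diag}(J,-J)\,(M,-N)^*=MJM^*-NJN^*$ --- and you correctly flag the endpoint decomposition of the maximal domain (the genuinely singular ingredient) as the step that must be imported from \cite{Shi2004,Sun-Shi2010}.
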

For the proof of Lemma \ref{self-adjoint extension continuous case}, we refer the readers to Theorem 5.6 in \cite{Sun-Shi2010}. The self-adjoint extension of $H_0$ with the domain $D$ is denoted by $H_D$. Let $\Gamma_c(\lambda)=\det(M\Phi_{\lambda}(a)-N\Phi_{\lambda}(b)), \lambda\in\mathbb{\mathbb{C}}.$ Combining (\ref{singular continuous linear Hamiltonian system}) and the boundary condition in $D$, the following result is straightforward.

\begin{lemma}
Assume that (\ref{singular continuous linear Hamiltonian system}) is in the limit circle cases at both endpoints $a$ and $b$. Then the spectrum of $H_D$ consists of isolated eigenvalues. Moreover, $\lambda\in\mathbb{C}$ is an eigenvalue of $H_D$ if and only of it is a zero of $\Gamma_c$.
\end{lemma}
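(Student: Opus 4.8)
The plan is to reduce the statement to a finite-dimensional linear-algebra condition, using that at a limit circle endpoint every solution of the system is square-integrable. First I would recall from \cite{Shi2004} that the limit circle hypothesis at $a$ and $b$ forces the $2m$ solutions $\phi_{1,\lambda},\dots,\phi_{2m,\lambda}$ to lie in $\hat L^2_W(a,b)$ for every $\lambda\in\mathbb{C}$, so that each solution $y$ of (\ref{singular continuous linear Hamiltonian system}) has the form $y=\sum_{i=1}^{2m}c_i\phi_{i,\lambda}$ with $c=(c_1,\dots,c_{2m})^T\in\mathbb{C}^{2m}$ and lies in $D(H)$; moreover, by the definiteness condition (\ref{definiteness condition conitinuous}), a nontrivial solution ($c\neq 0$) represents a nonzero class in $L^2_W(a,b)$, hence is a genuine eigenfunction whenever it lies in $D$. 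Thus $\lambda$ is an eigenvalue of $H_D$ if and only if some nontrivial solution $y$ satisfies the boundary condition in (\ref{boundary condition continuous}). Using that the limits $(\phi_{i,\lambda},\phi_j)_c(a)$ and $(\phi_{i,\lambda},\phi_j)_c(b)$ exist and are finite (recorded before Lemma~\ref{self-adjoint extension continuous case}) together with the linearity of $g\mapsto(g,\phi_j)_c(t)$ in its first slot, one gets $\big((y,\phi_1)_c(t),\dots,(y,\phi_{2m})_c(t)\big)^T=\Phi_\lambda(t)c$ for $t=a$ and $t=b$, so the boundary condition becomes the homogeneous system $\big(M\Phi_\lambda(a)-N\Phi_\lambda(b)\big)c=0$. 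This system admits $c\neq 0$ exactly when $\Gamma_c(\lambda)=\det\big(M\Phi_\lambda(a)-N\Phi_\lambda(b)\big)=0$, which is the asserted equivalence.

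It remains to show that the spectrum is discrete. Each entry of $\Phi_\lambda(a)$ and of $\Phi_\lambda(b)$ is an entire function of $\lambda$ (noted before Lemma~\ref{self-adjoint extension continuous case}), hence $\Gamma_c$ is entire. Since $H_D$ is self-adjoint, its eigenvalues are real, so by the equivalence just proved $\Gamma_c(\lambda)\neq 0$ for every $\lambda\in\mathbb{C}\setminus\mathbb{R}$; in particular $\Gamma_c\not\equiv 0$, and therefore its zeros are isolated and have no finite accumulation point. Finally, when $\Gamma_c(\lambda)\neq 0$ the equation $(H_D-\lambda)y=g$ is uniquely solvable for every $g\in L^2_W(a,b)$: one picks a particular solution in $\hat L^2_W(a,b)$, available by the limit circle property, and then adjusts the $2m$ free constants, which is possible in exactly one way because $M\Phi_\lambda(a)-N\Phi_\lambda(b)$ is invertible; hence such $\lambda$ lie in the resolvent set, and $\sigma(H_D)$ coincides with the (discrete) zero set of $\Gamma_c$.

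The whole argument is essentially bookkeeping once the limit circle input is invoked. The only mildly delicate points are the interchange of the boundary limits $t\to a^+$, $t\to b^-$ with the finite linear combination defining $y$ --- immediate from finiteness of the individual limits and linearity --- and the resolvent construction in the last step, which is a standard variation-of-parameters computation; I do not anticipate a genuine obstacle.
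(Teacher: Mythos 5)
Your proof is correct and follows exactly the reduction the paper has in mind when it calls the result ``straightforward'': writing a candidate eigenfunction as $y=\sum_i c_i\phi_{i,\lambda}$ (legitimate since the limit circle hypothesis puts all solutions in $\hat L^2_W(a,b)$ and condition (A) makes nontrivial solutions nonzero in $L^2_W(a,b)$), the boundary condition becomes $(M\Phi_\lambda(a)-N\Phi_\lambda(b))c=0$, and the rest is linear algebra plus the observation that $\Gamma_c$ is entire and nonvanishing off $\mathbb{R}$ by self-adjointness. The only part going beyond the paper's hint is your final resolvent/variation-of-parameters step showing non-zeros of $\Gamma_c$ lie in the resolvent set, which is the standard limit-circle argument and is fine as sketched.
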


Let $\lambda$ be an eigenvalue of $H_D$. The order of $\lambda$ as a zero of $\Gamma_c$ is called its analytic multiplicity. The number of linearly independent eigenfunctions for $\lambda$ is called its geometric multiplicity.

\subsection{Discrete case}

In this case, the maximal operator is not single-valued in general. To proceed it, we use the theory of linear relations and briefly introduce some related results.
Let $X$ be a Hilbert space with inner product $\langle\cdot,\cdot\rangle$, and $T$ be a self-adjoint  linear relation in $X^2$, which means $T=T^*:=\{(h,k)\in X^2:\langle g,h\rangle=\langle f,k\rangle$ for all $(f,g)\in T\}$. Denote $D(T)=\{f\in X:(f,g)\in T\textrm{\;for some }g\in X\}$ and $N(T)=\{f\in X:(f,0)\in T\}.$ Following \cite{Arens1961}, we decompose $T$ as
$T=T_s\oplus T_{\infty}$, where $T_\infty =\{(0,g)\in X^2:(0,g)\in T\}$. Then we know from Proposition 2.1 in \cite{Shi-Shao-Ren2013} that $T_\infty=T\cap T(0)^2$ and $T_s=T\cap (T(0)^{\bot})^2$. $T_s$ is a self-adjoint operator on $T(0)^{\bot}$ (see, P. 26 in \cite{Dijksma-Snoo1985}) and  $\sigma(T)=\sigma(T_s)$ (see Theorem 2.1 in \cite{Shi-Shao-Ren2013}). We define the product of two linear relations as $T_1T_2=\{(f,g)\in X^2:(f,h)\in T_2,(h,g)\in T_1\textrm{\;for some }h\in X\}$.
If $N(T-\lambda I)\neq0$, then $\lambda$ is called an eigenvalue of $T$. We define $\dim N(T-\lambda I)$ to be the geometric multiplicity of $\lambda$,  $\cup_{k\geq1}N((T-\lambda I)^k)$ to be the generalized eigenspace, and $\dim \cup_{k\geq1}N((T-\lambda I)^k)$ to be the algebraic multiplicity of $\lambda$. Then we give their relationship:
\begin{lemma}\label{linear relation multiplicity}
Let $T$ be a self-adjoint  linear relation in $X^2$ and $\lambda$ be an eigenvalue of $T$. Then
 $N((T-\lambda I)^i)=N((T_s-\lambda I)^i)$ and $N(T-\lambda I)=N((T-\lambda I)^i)$ for all $i\geq1$. Consequently, the algebraic and geometric multiplicities of $\lambda$ are equal.
\end{lemma}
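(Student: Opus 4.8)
The plan is to push everything onto the self-adjoint operator part $T_s$. Write $S=T-\lambda I$, set $H_0=T(0)^{\bot}$, let $A$ be the operator $T_s-\lambda I$ on the Hilbert space $H_0$, and put $B=T_{\infty}=\{0\}\times T(0)$. Since $\lambda$ is an eigenvalue of $T$, it lies in $\sigma(T)=\sigma(T_s)\subset\mathbb{R}$, so $\lambda$ is real and $A$ is self-adjoint on $H_0$.

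First I would check that $S=A\oplus B$, in the sense that $S=\{(f,g_1+g_2):(f,g_1)\in A,\ g_2\in T(0)\}$. This follows from $T=T_s\oplus T_{\infty}$: a pair $(f,g)\in T$ is exactly one of the form $(f,k_1)+(0,k_2)$ with $(f,k_1)\in T_s$ and $k_2\in T(0)$, and subtracting $\lambda f$ in the second component gives the claim. The sum is direct, since any element of $A$ has its second component in $H_0$ (because $D(T_s),\ \mathrm{ran}\,T_s\subset H_0$) while any element of $B$ has its second component in $T(0)$, and $H_0\cap T(0)=\{0\}$; in particular $D(S)=D(A)\subset H_0$.

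Next I would establish by induction the identity
\[
S^{i}=A^{i}\oplus B,\qquad i\geq1,
\]
where $A^{i}$ is the usual operator power in $H_0$. The case $i=1$ is the previous step. For the step from $i$ to $i+1$, take $(f,g)\in S^{i+1}$, so that $(f,h)\in S^{i}$ and $(h,g)\in S$ for some $h$. The membership $(h,g)\in S$ forces $h\in D(S)\subset H_0$; writing $(f,h)=(f,h_1)+(0,h_2)$ according to $S^{i}=A^{i}\oplus B$, one gets $h_2=h-h_1\in H_0\cap T(0)=\{0\}$, i.e.\ $(f,h)\in A^{i}$ with $h\in H_0$. Decomposing $(h,g)=(h,g_1)+(0,g_2)$ with $(h,g_1)\in A$ and $g_2\in T(0)$ then yields $(f,g_1)\in A^{i+1}$, so $(f,g)\in A^{i+1}\oplus B$; the reverse inclusion is immediate from $A,B\subset S$. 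I expect this bookkeeping for products of linear relations --- in particular the observation that the intermediate element $h$ must lie in $H_0$ --- to be the main obstacle; everything else is routine.

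Finally, $S^{i}=A^{i}\oplus B$ gives $N(S^{i})=N(A^{i})$ at once: if $(f,0)\in S^{i}$ then $g_1+g_2=0$ with $g_1\in H_0$ and $g_2\in T(0)$, forcing $g_1=g_2=0$ and $(f,0)\in A^{i}$. This is the asserted equality $N((T-\lambda I)^{i})=N((T_s-\lambda I)^{i})$. Since $A$ is self-adjoint, $N(A^{2})=N(A)$ --- from $A^{2}x=0$ one gets $\|Ax\|^{2}=\langle A^{2}x,x\rangle=0$ --- and iterating downwards $N(A^{i})=N(A)$ for all $i\geq1$. Combining, $N(T-\lambda I)=N(A)=N(A^{i})=N(S^{i})=N((T-\lambda I)^{i})$ for every $i\geq1$, so the generalized eigenspace $\bigcup_{k\geq1}N((T-\lambda I)^{k})$ coincides with the eigenspace $N(T-\lambda I)$, and hence the algebraic and geometric multiplicities of $\lambda$ are equal.
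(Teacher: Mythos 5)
Your proof is correct and follows essentially the same route as the paper: both reduce everything to the operator part $T_s$ via the Arens decomposition $T=T_s\oplus T_{\infty}$, kill the multivalued component by observing that the relevant elements lie in $T(0)^{\bot}$, and then invoke self-adjointness of $T_s$ to get $N(A^i)=N(A)$. The only difference is cosmetic: you prove the slightly stronger relation identity $(T-\lambda I)^i=(T_s-\lambda I)^i\oplus T_{\infty}$ and read off the null spaces, whereas the paper runs the induction directly on the null spaces (citing the base case from the literature).
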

\begin{proof} We give the proof by induction.
By Theorem 2.2 in \cite{Shi-Shao-Ren2013}, $N(T-\lambda I )= N(T_s-\lambda I )$. Suppose that $ N((T-\lambda I)^k)=N((T_s-\lambda I)^k)$ for some fixed $k\geq1$. We claim that $ N((T-\lambda I)^{k+1})=N((T_s-\lambda I)^{k+1})$. Indeed,  $N((T_s-\lambda I)^{k+1})\subset N((T-\lambda I)^{k+1})$ is evident. Conversely, let $f\in N((T-\lambda I)^{k+1})$.
Then $(f,0)\in (T-\lambda I)^{k}(T-\lambda I)$. So there exists $g\in X$ such that $(f,g)\in T-\lambda I$ and $(g,0)\in (T-\lambda I)^k$.
This gives $g\in N((T-\lambda I)^k)= N((T_s-\lambda I)^k)$ and $(f,g+\lambda f)\in T$.
Thus, $(g,0)\in (T_s-\lambda I)^k$, which, in particular, yields $g\in T(0)^{\perp}$.
We get by the decomposition of $T$ that $(f,g+\lambda f)=(f,g+\lambda f-h)+(0,h)$, where $(f,g+\lambda f-h)\in T_s$ and $(0,h)\in T_\infty$.
So $f,g+\lambda f-h\in T(0)^{\perp}$, which along with $g\in T(0)^{\perp}$, gives $h\in T(0)^{\perp}$. Noting that $h\in T(0)$, we obtain $h=0$.
Hence, $(f,g+\lambda f)\in T_s$ and $(f,g)\in T_s-\lambda I$. Recall that $(g,0)\in (T_s-\lambda I)^k$, we have $(f,0)\in(T_s-\lambda I)^{k+1}$, which gives $f\in N ((T_s-\lambda I)^{k+1})$.

Since $T_s$ is a self-adjoint operator, $N(T-\lambda I)=N(T_s-\lambda I)=N((T_s-\lambda I)^i)=N((T-\lambda I)^i)$ for all $i\geq1$.
 This completes the proof.\end{proof}

Define the space
\begin{align*}
\hat l^2_W(\mathbb{Z})=\left\{\{\hat f(n)\}_{n\in\mathbb{Z}}\subset\mathbb{C}^{2m}:\sum_{n\in\mathbb{Z}}R(\hat f)^*(n)W(n)R(\hat f)(n)<\infty\right\}
\end{align*}
with the semi-inner product $\langle \hat y,\hat z\rangle_d:=\sum_{n\in\mathbb{Z}}R(\hat z)^*(n)W(n)R(\hat y)(n)$ and let $l^2_W(\mathbb{Z})=\hat l^2_W(\mathbb{Z})/\{\hat y\in \hat l^2_W(\mathbb{Z}):\|\hat y\|_d=0\}.$ The maximal, pre-minimal and minimal linear relations  corresponding to (\ref{singular discrete linear Hamiltonian system}) are defined as follows:
\begin{align*}
\mathcal{T}=&\{(y,g)\in (l^2_W(\mathbb{Z}))^2:\textrm{there exists } \hat y\in y \textrm{ such that }
l (\hat y)(n)=W(n)R(\hat g)(n),n\in\mathbb{Z} \textrm{ for any}\\
&\;\hat g\in g\},\\
\mathcal{T}_{00}=&\{(y,g)\in \mathcal{T}:\textrm{ there  exist } \hat y\in y \textrm{ and two integers } s\leq k \textrm{ such that } \hat y(n)=0 \textrm{ for }n\leq s \textrm{ and}\\
 &  \;n\geq k+1,\textrm{ and } l(\hat y)(n)=W(n)R(\hat g)(n), \;n\in\mathbb{Z} \textrm{ for any }\hat g\in g\},\\
\mathcal{T}_{0}=&\overline{\mathcal{T}}_{00},
\end{align*}
where $l(\hat y)(n)=J\Delta \hat y(n)-P(n)R(\hat y)(n).$
We remark here that $\mathcal{T}$, $\mathcal{T}_{00}$ and $\mathcal{T}_{0}$ are not necessarily single-valued (see \cite{Ren-Shi2011,Ren-Shi2014}).  It was shown in \cite{Ren-Shi2011} that for any $(y,g)\in \mathcal{T}$, there exists a unique $\hat y\in y$ such that  $l(\hat y)(n)=W(n)R(\hat g)(n)$ for $n\in \mathbb{Z}$ by using $(B_1)$ and $(B_2)$. Thus we identify $\hat y$ and $y$ when $y\in D(\mathcal{T})$. Moreover,
\begin{align*}
(x,y)_d(n):=y^*(n)Jx(n),\;\;n\in \mathbb{Z}
\end{align*}
is well-defined for any $x,y\in D(\mathcal{T})$, and
\begin{align*}
\sum_{n=s}^k[R(y)^*(n)l(x)(n)-l(y)^*(n)R(x)(n)]=(x,y)_d(n)|_{s}^{k+1},
\end{align*}
which gives $(x,y)_d(\pm\infty):=\lim_{n\to\pm\infty}(x,y)_d(n)$ exists and are finite.

Let (\ref{singular discrete linear Hamiltonian system}) be in the limit circle cases at both  endpoints $\pm\infty$. Limit circle endpoint in discrete case is defined by defect indices of the left and right minimal linear relations, see Definition 5.1 in \cite{Shi2006} or Definition 3.1 in \cite{Ren-Shi2014}. Then we follow from Theorem 5.1 in \cite{Ren-Shi2011} that for all $\lambda\in\mathbb{C}$, (\ref{singular discrete linear Hamiltonian system}) has  $2m$ linearly independent solutions in $\hat l_W^2(\mathbb{Z})$, denoted by $\theta_{i,\lambda}$ ($1\leq i\leq 2m$), satisfying \begin{align*}
(\theta_{1,\lambda}(0),\theta_{2,\lambda}(0),\cdots,\theta_{2m,\lambda}(0))=I_{2m}.
\end{align*}
We define $\theta_i=\theta_{i,0}$, $1\leq i\leq 2m$,  and
\begin{align*}
\Theta_{\lambda}(n)=\begin{pmatrix}(\theta_{1,\lambda},\theta_1)_d(n)&\cdots&(\theta_{2m,\lambda},\theta_{1})_d(n)\\
\vdots&&\vdots\\(\theta_{1,\lambda},\theta_{2m})_d(n)&\cdots&(\theta_{2m,\lambda},\theta_{2m})_d(n)
\end{pmatrix},n\in\mathbb{Z}\cup\{\pm\infty\}.
\end{align*}
For a fixed $n\in\mathbb{Z}\cup\{\pm\infty\}$ and any $1\leq i,j\leq 2m$, $(\theta_{i,\lambda},\theta_j)_d(n)$ is an entire function of $\lambda$.
Next, the self-adjoint boundary conditions  are given in the following lemma:

\begin{lemma}\label{self-adjoint boundary condition in discrete case}
Assume that  $(\ref{singular discrete linear Hamiltonian system})$ is in the limit circle cases at $\pm\infty$.
Then a linear relation $\mathcal{T}_D\subset (l_W^2(\mathbb{Z}))^2$ is a self-adjoint linear relation extension of $\mathcal{T}_0$ if and only if there exist two $2m\times 2m$ matrices $M$ and $N$ such that $rank(M,N)=2m, MJM^*=NJN^*$, and
\begin{align}\label{boundary condition discrete}
\mathcal{T}_D=\left\{ (y,g)\in \mathcal{T}:M\begin{pmatrix}(y,\theta_1)_d(-\infty)\\\vdots\\(y,\theta_{2m})_d(-\infty)\end{pmatrix}
-N\begin{pmatrix}(y,\theta_1)_d(+\infty)\\\vdots\\(y,\theta_{2m})_d(+\infty)\end{pmatrix}=0\right\}.
\end{align}
\end{lemma}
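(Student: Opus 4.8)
The plan is to follow the Glazman--Krein--Naimark scheme, just as in the continuous case (Theorem 5.6 in \cite{Sun-Shi2010}), but carried out for linear relations as in \cite{Shi2006,Ren-Shi2011,Ren-Shi2014}. First I would record the Lagrange identity implicit in the formulas above: letting $s\to-\infty$ and $k\to+\infty$ in $\sum_{n=s}^{k}[R(y)^*(n)l(x)(n)-l(y)^*(n)R(x)(n)]=(x,y)_d(n)|_{s}^{k+1}$ gives, for any $(x,g),(y,h)\in\mathcal{T}$, the identity $\langle g,y\rangle_d-\langle x,h\rangle_d=(x,y)_d(+\infty)-(x,y)_d(-\infty)$. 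Hence the boundary form $[x,y]:=(x,y)_d(+\infty)-(x,y)_d(-\infty)$ is a well-defined sesquilinear form on $D(\mathcal{T})$ depending only on $x,y\in D(\mathcal{T})$, and it vanishes whenever $x$ or $y$ lies in $D(\mathcal{T}_0)$.

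Second, I would linearize this form. Applying the Lagrange identity to $\theta_i$ and $\theta_j$ (both solutions at $\lambda=0$) shows that $(\theta_i,\theta_j)_d(n)$ is independent of $n$, so the fundamental matrix $\Psi(n):=(\theta_1(n),\dots,\theta_{2m}(n))$---which is invertible for every $n$ by $(B_1)$---satisfies $\Psi(n)^*J\Psi(n)\equiv\Psi(0)^*J\Psi(0)=J$. Writing $B_x(n):=((x,\theta_1)_d(n),\dots,(x,\theta_{2m})_d(n))^T=\Psi(n)^*Jx(n)$, one gets $\Psi(n)^{-1}x(n)=-JB_x(n)$ and hence $(x,y)_d(n)=B_y(n)^*JB_x(n)$. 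Letting $n\to\pm\infty$ and setting $\beta(x):=(B_x(-\infty)^T,B_x(+\infty)^T)^T\in\mathbb{C}^{4m}$, the boundary form becomes $[x,y]=\beta(y)^*\widetilde{J}\,\beta(x)$ with $\widetilde{J}:=\mathrm{diag}(-J,J)$, a nondegenerate skew-Hermitian matrix.

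Third, I would show that $\beta:D(\mathcal{T})\to\mathbb{C}^{4m}$ is onto with kernel exactly $D(\mathcal{T}_0)$; equivalently $\dim D(\mathcal{T})/D(\mathcal{T}_0)=4m$, which is twice the common deficiency index $2m$ of $\mathcal{T}_0$ (in the limit circle case all $2m$ solutions lie in $\hat l^2_W(\mathbb{Z})$ by Theorem 5.1 in \cite{Ren-Shi2011}). With this in hand, the theory of self-adjoint extensions of symmetric relations (see \cite{Dijksma-Snoo1985}, and its Hamiltonian-difference form in \cite{Shi2006,Ren-Shi2014}) gives that $\mathcal{T}_D$ with $\mathcal{T}_0\subset\mathcal{T}_D\subset\mathcal{T}$ is self-adjoint if and only if $V:=\beta(D(\mathcal{T}_D))$ is a maximal $\widetilde{J}$-isotropic (Lagrangian) subspace of $\mathbb{C}^{4m}$, necessarily of dimension $2m$. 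Finally I would translate ``Lagrangian'' into matrix form: every $2m$-dimensional $V\subset\mathbb{C}^{4m}$ is $\{(\xi_{-}^T,\xi_{+}^T)^T:M\xi_{-}-N\xi_{+}=0\}$ for some $2m\times2m$ matrices $M,N$ with $\mathrm{rank}(M,N)=2m$; since the $\widetilde{J}$-orthogonal complement of $V$ is $\widetilde{J}^{-1}V^{\perp}=\{((JM^*c)^T,(JN^*c)^T)^T:c\in\mathbb{C}^{2m}\}$, which has dimension $2m$ and is contained in $V$ exactly when $MJM^*=NJN^*$, we conclude that $V$ is Lagrangian if and only if $MJM^*=NJN^*$. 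Recalling that $\beta(y)$ is precisely the column of brackets in (\ref{boundary condition discrete}), this yields the stated characterization.

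I expect the third step to be the main obstacle: identifying $D(\mathcal{T}_0)$ with $\ker\beta$ and proving surjectivity of $\beta$ in the presence of the possible multivaluedness of $\mathcal{T}$ and $\mathcal{T}_0$. This is exactly where the limit circle hypotheses at $\pm\infty$ and the conditions $(B_1)$ and $(B_2)$ are indispensable, and it is the part I would import with care from \cite{Ren-Shi2011} and \cite{Ren-Shi2014}; the remaining ingredients are elementary linear algebra together with the abstract extension theory.
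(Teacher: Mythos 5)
The paper does not prove this lemma at all: it simply refers to Theorem 5.6 of Ren--Shi (Linear Algebra Appl.\ 454 (2014)), and that theorem is established by precisely the Glazman--Krein--Naimark scheme for linear relations that you outline (Lagrange identity, reduction of the boundary form to $\beta(y)^*\widetilde J\beta(x)$ via the constancy of $\Psi(n)^*J\Psi(n)$, surjectivity of the boundary map with kernel $D(\mathcal{T}_0)$ from the limit circle hypothesis, and the Lagrangian-subspace computation giving $\mathrm{rank}(M,N)=2m$ and $MJM^*=NJN^*$). Your reconstruction is sound, and you correctly identify the only delicate point—handling the multivalued parts of $\mathcal{T}$ and $\mathcal{T}_0$ when computing $\dim D(\mathcal{T})/D(\mathcal{T}_0)=4m$—which is exactly what conditions $(B_1)$, $(B_2)$ and the analysis in the cited references are used for.
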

We refer  to Theorem 5.6 in \cite{Ren-Shi2014} for the proof of Lemma  \ref{self-adjoint boundary condition in discrete case}.
 Let $\Gamma_d(\lambda)=\det(M\Theta_{\lambda}(-\infty)-N\Theta_{\lambda}(+\infty)), \lambda\in\mathbb{\mathbb{C}}.$ Then the spectrum of $\mathcal{T}_D$ can be  characterized as follows.

\begin{lemma}
Assume that (\ref{singular discrete linear Hamiltonian system}) is in the limit circle cases at $\pm\infty$. Then the spectrum of $\mathcal{T}_D$ consists of isolated eigenvalues. Moreover, $\lambda\in\mathbb{C}$ is an eigenvalue of $\mathcal{T}_D$ if and only of it is a zero of $\Gamma_d$.
\end{lemma}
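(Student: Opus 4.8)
The plan is to carry the continuous argument of the preceding subsection over almost verbatim, the only new ingredient being the bookkeeping forced by the linear-relation formalism. Since $(\ref{singular discrete linear Hamiltonian system})$ is in the limit circle cases at $\pm\infty$, every solution of the system belongs to $\hat l^2_W(\mathbb{Z})$; moreover, by $(B_2)$, any nontrivial solution $y$ satisfies $\|y\|_d^2=\sum_{n\in\mathbb{Z}}R(y)^*(n)W(n)R(y)(n)\ge\sum_{n=s_0^l}^{t_0^l}R(y)^*(n)W(n)R(y)(n)>0$, so it represents a nonzero class in $l^2_W(\mathbb{Z})$, and conversely a solution that vanishes in $l^2_W(\mathbb{Z})$ is identically zero. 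Unwinding the definition of $\mathcal{T}$, the pair $(y,\lambda y)$ lies in $\mathcal{T}$ exactly when the (unique) representative $\hat y$ satisfies $l(\hat y)(n)=\lambda W(n)R(\hat y)(n)$, i.e. solves $(\ref{singular discrete linear Hamiltonian system})$. Hence, for every $\lambda\in\mathbb{C}$, the nonzero elements of $N(\mathcal{T}_D-\lambda I)$ are precisely the classes of nontrivial solutions of $(\ref{singular discrete linear Hamiltonian system})$ that satisfy the boundary condition $(\ref{boundary condition discrete})$.

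Next I would expand an arbitrary solution as $y=\sum_{i=1}^{2m}c_i\theta_{i,\lambda}$ with $c=(c_1,\dots,c_{2m})^T\in\mathbb{C}^{2m}$; the normalization $(\theta_{1,\lambda}(0),\dots,\theta_{2m,\lambda}(0))=I_{2m}$ gives $y(0)=c$, so linear independence of the $\theta_{i,\lambda}$ yields $y\equiv0\iff c=0$. Since $(\cdot,\cdot)_d(n)$ is linear in its first argument, $(y,\theta_j)_d(n)=\sum_ic_i(\theta_{i,\lambda},\theta_j)_d(n)=[\Theta_\lambda(n)c]_j$, so the column vector $((y,\theta_1)_d(n),\dots,(y,\theta_{2m})_d(n))^T$ equals $\Theta_\lambda(n)c$; passing to the limits $n\to\pm\infty$ (which exist, as recorded before the lemma), the boundary condition $(\ref{boundary condition discrete})$ reads $(M\Theta_\lambda(-\infty)-N\Theta_\lambda(+\infty))c=0$. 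Combining with the previous paragraph: $\lambda$ is an eigenvalue of $\mathcal{T}_D$ if and only if this $2m\times2m$ homogeneous system has a nonzero solution $c$, i.e. if and only if $\Gamma_d(\lambda)=\det(M\Theta_\lambda(-\infty)-N\Theta_\lambda(+\infty))=0$; moreover the geometric multiplicity of an eigenvalue $\lambda$ equals $2m-\operatorname{rank}(M\Theta_\lambda(-\infty)-N\Theta_\lambda(+\infty))$, which will be needed in Section 4.

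For the first assertion, note that each $(\theta_{i,\lambda},\theta_j)_d(\pm\infty)$ is entire in $\lambda$, hence $\Gamma_d$ is entire. It is not identically zero: $\mathcal{T}_D$ is a self-adjoint linear relation, so by Lemma \ref{linear relation multiplicity} its eigenvalues coincide with those of the self-adjoint operator $(\mathcal{T}_D)_s$ and are therefore real, so $\Gamma_d$ cannot vanish at any $\lambda\in\mathbb{C}\setminus\mathbb{R}$. Consequently the zero set of $\Gamma_d$ has no finite accumulation point. It remains to see that $\sigma(\mathcal{T}_D)$ is exactly this zero set: one shows that whenever $\Gamma_d(\lambda)\neq0$ the relation $\mathcal{T}_D-\lambda I$ has a bounded everywhere-defined inverse, by solving the nonhomogeneous system $J\Delta\hat y(n)-(P(n)+\lambda W(n))R(\hat y)(n)=W(n)R(\hat g)(n)$ via variation of parameters — here $(B_1)$ guarantees solvability of the initial value problem, the limit circle hypothesis puts all solutions in $\hat l^2_W(\mathbb{Z})$, and the invertibility of $M\Theta_\lambda(-\infty)-N\Theta_\lambda(+\infty)$ lets one match the boundary condition; equivalently, in the limit circle case $(\mathcal{T}_D)_s$ has compact resolvent, which may be quoted from the cited works. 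Then $\sigma(\mathcal{T}_D)=\sigma((\mathcal{T}_D)_s)$ equals the (discrete) zero set of $\Gamma_d$ and consists of isolated eigenvalues.

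The routine parts are the unwinding of $\mathcal{T}$, the bilinearity computation, and the determinant criterion. The one step that genuinely needs care is excluding a continuous (or residual) spectrum — that is, showing $\lambda\in\rho(\mathcal{T}_D)$ as soon as $\Gamma_d(\lambda)\neq0$ — which rests on the limit-circle fact that the resolvent can be written as a Hilbert--Schmidt Green-function operator on $l^2_W(\mathbb{Z})$; I expect this to be the main obstacle, though it is standard in the limit-point/limit-circle theory already invoked in the excerpt.
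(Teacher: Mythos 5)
Your proposal is correct, and in fact more detailed than the paper, which states this lemma without proof (as it does its continuous counterpart, which it calls ``straightforward''); the equivalence ``eigenvalue $\iff$ zero of $\Gamma_d$'' is exactly the determinant criterion you give, and your use of $(B_2)$ to pass between nontrivial solutions and nonzero classes in $l^2_W(\mathbb{Z})$ is the right way to make that step honest in the quotient space. The only part you leave as a sketch --- that $\Gamma_d(\lambda)\neq0$ forces $\lambda\in\rho(\mathcal{T}_D)$, so that the spectrum is purely discrete --- is indeed the one nontrivial ingredient; your two suggested routes (variation of parameters with the invertible matrix $M\Theta_\lambda(-\infty)-N\Theta_\lambda(+\infty)$, or quoting that in the limit circle case the self-adjoint extensions have compact resolvent) are both standard and are what the cited self-adjoint-extension literature (e.g.\ Ren--Shi) supplies. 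One small stylistic point: your observation that non-real $\lambda$ cannot be eigenvalues of the self-adjoint relation $\mathcal{T}_D$ already follows from self-adjointness directly, without invoking Lemma \ref{linear relation multiplicity}; that lemma is really needed only later, in Section 4.
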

 The analytic  multiplicity of an eigenvalue in discrete case is its order  as a zero of $\Gamma_d$.

\section{Continuous Hamiltonian system: equality of multiplicities of eigenvalues}

In this section, we give the relationship of analytic and geometric multiplicities of an eigenvalue of  the singular continuous linear Hamiltonian equation  (\ref{singular continuous linear Hamiltonian system}) with boundary condition in (\ref{boundary condition continuous}).

\begin{theorem}\label{continuous}
Assume that (\ref{singular continuous linear Hamiltonian system}) is in the limit circle cases at both endpoints $a$ and $b$.
Let $\lambda_0$ be an eigenvalue of $H_D$.
Then its analytic and geometric multiplicities are equal.
\end{theorem}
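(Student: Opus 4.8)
The plan is to realize $\Gamma_c$ as a determinant $\det\Delta(\lambda)$, reduce the theorem to a statement about the order of a zero of that determinant, and then close the gap using self-adjointness of $H_D$ together with the definiteness condition $(\ref{definiteness condition conitinuous})$. First I would fix notation: write $\Psi_\lambda(t)=(\phi_{1,\lambda}(t),\dots,\phi_{2m,\lambda}(t))$ for the fundamental matrix of $(\ref{singular continuous linear Hamiltonian system})$, so that every solution for the parameter $\lambda$ is $\Psi_\lambda(\cdot)c$ with $c\in\mathbb{C}^{2m}$, and put $\Delta(\lambda)=M\Phi_\lambda(a)-N\Phi_\lambda(b)$, so $\Gamma_c(\lambda)=\det\Delta(\lambda)$ and, by the definition of $\Phi_\lambda$, $\Phi_\lambda(t)=\Psi_0^{*}(t)J\Psi_\lambda(t)$ on $(a,b)$ (extended to the endpoints by the limits defining $(\cdot,\cdot)_c(a),(\cdot,\cdot)_c(b)$). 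Since $(y,\phi_r)_c(t)=\phi_r^{*}(t)Jy(t)$, for $y=\Psi_\lambda(\cdot)c$ the column $\bigl((y,\phi_1)_c(t),\dots,(y,\phi_{2m})_c(t)\bigr)^{T}$ equals $\Phi_\lambda(t)c$; letting $t\to a^{+},b^{-}$ and comparing with $(\ref{boundary condition continuous})$ shows that $y$ is an eigenfunction of $H_D$ for $\lambda_0$ exactly when $\Delta(\lambda_0)c=0$, $c\neq0$, and since a non-trivial solution has positive norm by $(\ref{definiteness condition conitinuous})$, the map $c\mapsto\Psi_{\lambda_0}(\cdot)c$ is injective into $L^2_W(a,b)$. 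Hence the geometric multiplicity equals $d:=\dim\ker\Delta(\lambda_0)=2m-{\rm rank}\,\Delta(\lambda_0)$, and the theorem reduces to showing that $\lambda_0$ is a zero of $\det\Delta$ of order exactly $d$.

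The next step is linear algebra. Multiplying $\Delta(\lambda)$ by constant permutation matrices on the left and right changes $\det\Delta$ only by a nonzero constant and preserves all ranks, so I may assume the leading $(2m-d)\times(2m-d)$ block $\Delta_{11}(\lambda_0)$ is invertible; then for $\lambda$ near $\lambda_0$ one has $\det\Delta(\lambda)=\det\Delta_{11}(\lambda)\,\det S(\lambda)$ with $S(\lambda)=\Delta_{22}(\lambda)-\Delta_{21}(\lambda)\Delta_{11}(\lambda)^{-1}\Delta_{12}(\lambda)$ the $d\times d$ Schur complement. Since ${\rm rank}\,\Delta(\lambda_0)=2m-d$, $S(\lambda_0)=0$, so $\det\Delta$ vanishes at $\lambda_0$ to order $\geq d$ — which already gives analytic $\geq$ geometric — with equality precisely when $S'(\lambda_0)$ is invertible. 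A routine Schur-complement computation gives $S'(\lambda_0)=E^{*}\Delta'(\lambda_0)F$, where the columns of $F$ (resp.\ $E$) form a basis of $\ker\Delta(\lambda_0)$ (resp.\ of $\ker\Delta(\lambda_0)^{*}=({\rm ran}\,\Delta(\lambda_0))^{\perp}$); hence $S'(\lambda_0)$ is singular if and only if there exist $v_0\in\ker\Delta(\lambda_0)\setminus\{0\}$ and $v_1\in\mathbb{C}^{2m}$ with
\[
\Delta(\lambda_0)v_0=0,\qquad\Delta'(\lambda_0)v_0+\Delta(\lambda_0)v_1=0,
\]
i.e.\ a Jordan chain of length two for $\Delta$ at $\lambda_0$. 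So everything comes down to excluding such a chain.

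Excluding this chain is the heart of the matter and the step I expect to be the \emph{main obstacle}. Given such $v_0,v_1$, set $\dot\Psi(t):=\left.\partial_\lambda\Psi_\lambda(t)\right|_{\lambda=\lambda_0}$ and $\dot\Phi(t):=\left.\partial_\lambda\Phi_\lambda(t)\right|_{\lambda=\lambda_0}$ — in the limit circle case $\dot\Psi$ again lies in $\hat L^2_W(a,b)$, being a solution of an inhomogeneous version of $(\ref{singular continuous linear Hamiltonian system})$ with right-hand side in $\hat L^2_W$ — and put $y_0=\Psi_{\lambda_0}(\cdot)v_0$, $y_1=\dot\Psi(\cdot)v_0+\Psi_{\lambda_0}(\cdot)v_1$. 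Differentiating $J\Psi_\lambda'=(P+\lambda W)\Psi_\lambda$ in $\lambda$ and evaluating at $\lambda_0$ gives $J(\dot\Psi)'=W\Psi_{\lambda_0}+(P+\lambda_0W)\dot\Psi$, whence a direct computation yields $\mathcal{L}(y_1)=W(y_0+\lambda_0 y_1)$, that is $H_D y_1=\lambda_0 y_1+y_0$, provided $y_1\in D$. And $y_1\in D$ follows from the second chain relation: using $\dot\Phi(t)=\Psi_0^{*}(t)J\dot\Psi(t)$ in parallel with $\Phi_\lambda(t)=\Psi_0^{*}(t)J\Psi_\lambda(t)$, the left-hand side of the boundary condition $(\ref{boundary condition continuous})$ for $y_1$ evaluates to $\bigl(M\dot\Phi(a)-N\dot\Phi(b)\bigr)v_0+\bigl(M\Phi_{\lambda_0}(a)-N\Phi_{\lambda_0}(b)\bigr)v_1=\Delta'(\lambda_0)v_0+\Delta(\lambda_0)v_1=0$. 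Since $H_D$ is self-adjoint, $\lambda_0\in\mathbb{R}$, and
\[
\langle y_0,y_0\rangle_c=\langle(H_D-\lambda_0)y_1,\,y_0\rangle_c=\langle y_1,\,(H_D-\lambda_0)y_0\rangle_c=0,
\]
while $y_0$ is a non-trivial solution of $(\ref{singular continuous linear Hamiltonian system})$ with $\lambda=\lambda_0$, so $(\ref{definiteness condition conitinuous})$ forces $\langle y_0,y_0\rangle_c=\int_a^b y_0^{*}Wy_0\,dt>0$ — a contradiction. Therefore $S'(\lambda_0)$ is invertible, $\det\Delta$ has a zero of order exactly $d$ at $\lambda_0$, and the analytic and geometric multiplicities coincide. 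The remaining points — the Schur-complement identity $S'(\lambda_0)=E^{*}\Delta'(\lambda_0)F$, the passage of all the above relations to the singular endpoints via the limits defining $(\cdot,\cdot)_c(a),(\cdot,\cdot)_c(b)$, and the verification that $y_0,y_1$ genuinely represent elements of $D(H)$ — are routine given the facts recalled in Section 2, and the identical scheme will prove the discrete Theorem with $\Psi_\lambda,\Phi_\lambda,\mathcal{L},H_D$ replaced by $\theta,\Theta_\lambda,l,\mathcal{T}_D$ and Lemma $\ref{linear relation multiplicity}$ absorbing the possible multivaluedness.
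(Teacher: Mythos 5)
Your proposal is correct, and it terminates at the same point as the paper's proof: a length-two Jordan chain for the characteristic matrix $\Delta(\lambda)=M\Phi_\lambda(a)-N\Phi_\lambda(b)$ produces an element $y_1\in D$ with $(H_D-\lambda_0)y_1=y_0$ for a genuine eigenfunction $y_0$, and self-adjointness together with the definiteness condition (\ref{definiteness condition conitinuous}) then forces $\langle y_0,y_0\rangle_c=0$, a contradiction. What differs is how you reach that chain. The paper replaces the $\phi_{i,\lambda}$ by solutions $y_{i,\lambda}$ whose initial values at $c_0$ extend a basis of the eigenspace, factors $(\lambda-\lambda_0)$ out of the first $\tau_2$ columns of $M\Psi_\lambda(a)-N\Psi_\lambda(b)$, and then studies a vanishing linear combination of the columns of the resulting matrix at $\lambda_0$, which requires a two-case discussion (the case where only the non-eigenfunction columns enter being excluded by linear independence). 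Your Schur-complement identity $S'(\lambda_0)=E^*\Delta'(\lambda_0)F$ reaches the same dichotomy without choosing an adapted basis and without the case split, since $v_0=Fw$ automatically lies in $\ker\Delta(\lambda_0)$; this is a cleaner piece of linear algebra, at the modest cost of verifying the Schur identity (which does check out). Two points that you label routine are exactly where the paper spends its analytic effort, and they are not free in the singular setting: first, the membership $\partial_\lambda\Psi_\lambda(\cdot)v_0\big|_{\lambda=\lambda_0}\in\hat L^2_W(a,b)$ does not follow merely from being a solution of an inhomogeneous system with right-hand side in $\hat L^2_W$; the paper proves it by the Cauchy-integral estimate $\|\partial_\lambda z(\cdot,\lambda_0)\|_c^2\leq\rho^{-2}\max_{\lambda\in\gamma_\rho}\|z(\cdot,\lambda)\|_c^2$ combined with the uniform bound from Atkinson's Theorem 9.11.2, and your argument needs this (or a variation-of-parameters bound exploiting that all $2m$ homogeneous solutions lie in $\hat L^2_W$) to be complete. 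Second, the interchange of $\partial_\lambda$ with the limits $t\to a^+$, $t\to b^-$ defining $\dot\Phi(a)$, $\dot\Phi(b)$ deserves justification, which the paper supplies by working directly with the Taylor coefficients $\varphi_{i,j,1}(a)$, $\varphi_{i,j,1}(b)$ of the entire functions $(y_{i,\lambda},\phi_j)_c(a)$, $(y_{i,\lambda},\phi_j)_c(b)$. With those two points filled in, your argument is a complete and slightly more streamlined proof, and, as you say, it transfers to the discrete case with Lemma \ref{linear relation multiplicity} absorbing the multivaluedness.
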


\begin{proof}
Denote the analytic and geometric multiplicities of $\lambda_0$ by $\tau_1$ and $\tau_2$, respectively. Let $\varphi_i$, $1\leq i\leq \tau_2$, be the linearly independent eigenfunctions for $\lambda_0$, and $\varphi_i$, $\tau_2+1\leq i\leq 2m$, be the solutions of (\ref{singular continuous linear Hamiltonian system}) with $\lambda=\lambda_0$ such that $\varphi_i$, $1\leq i\leq 2m$, are linearly independent.
Let $y_{i,\lambda}$ be the solutions of (\ref{singular continuous linear Hamiltonian system}) with $\lambda\in\mathbb{C}$ such that
$y_{i,\lambda}(c_0)=\varphi_i(c_0), 1\leq i\leq 2m$. Then $y_{i,\lambda_0}=\varphi_i$, $1\leq i\leq 2m$. For a fixed $t\in[a,b]$ and any $1\leq i,j,k\leq 2m$, recall that $(\phi_{k,\lambda},\phi_j)_c(t)$ is an entire function of $\lambda$, and thus we have the Taylor expansion $(y_{i,\lambda},\phi_j)_c(t)=\sum_{l=0}^\infty\varphi_{i,j,l}(t)(\lambda-\lambda_0)^l$ with $\varphi_{i,j,0}(t)=(\varphi_i,\phi_j)_c(t)$.
Define
\begin{align*}
\Psi_{\lambda}(t)=\begin{pmatrix}(y_{1,\lambda},\phi_1)_c(t)&\cdots&(y_{2m,\lambda},\phi_1)_c(t)\\
\vdots&&\vdots\\(y_{1,\lambda},\phi_{2m})_c(t)&\cdots&(y_{2m,\lambda},\phi_{2m})_c(t)\end{pmatrix},t\in [a,b].
\end{align*}
Then
\begin{align*}
 \Gamma_c(\lambda)=&\det(M\Phi_{\lambda}(a)-N\Phi_{\lambda}(b))\\
 =&\det\left(M\begin{pmatrix}\phi_{1}^*J\\\vdots\\\phi_{2m}^*J\end{pmatrix}(y_{1,\lambda}\cdots y_{2m,\lambda})(a)-N\begin{pmatrix}\phi_{1}^*J\\\vdots\\\phi_{2m}^*J\end{pmatrix}(y_{1,\lambda}\cdots y_{2m,\lambda})(b)\right)\cdot\\
 &\det\left((\varphi_1(c_0)\cdots\varphi_{2m}(c_0))^{-1}\right)\\
 =&\det(M\Psi_{\lambda}(a)-N\Psi_{\lambda}(b))\det\left((\varphi_1(c_0)\cdots \varphi_{2m}(c_0))^{-1}\right).
\end{align*}
Noting that
\begin{align*}
M\begin{pmatrix}(\varphi_i,\phi_1)_c(a)\\\vdots\\(\varphi_i,\phi_{2m})_c(a)\end{pmatrix}-
N\begin{pmatrix}(\varphi_i,\phi_1)_c(b)\\\vdots\\(\varphi_i,\phi_{2m})_c(b)\end{pmatrix}=0, \;\;1\leq i\leq \tau_2,
\end{align*}
we infer that the first $i$-th column  of $M\Psi_{\lambda}(a)-N\Psi_{\lambda}(b)$ contains the factor $\lambda-\lambda_0$.
Thus $\Gamma_c$ can be written as $\Gamma_c(\lambda)=(\lambda-\lambda_0)^{\tau_2}\tilde\Gamma_c(\lambda), \lambda\in\mathbb{C}$, where $\tilde \Gamma_c$ is an entire function of $\lambda$. Then we claim  that  $\tilde\Gamma_c(\lambda_0)\neq0$. This gives $\tau_1=\tau_2$ and thus completes the proof.

It remains to prove that $\tilde \Gamma_c(\lambda_0)\neq0$.
Let
\begin{align*}
\tilde \Phi(t)=\begin{pmatrix}\varphi_{1,1,1}(t)&\cdots&\varphi_{\tau_2,1,1}(t)&(\varphi_{\tau_2+1},\phi_1)_c(t)
&\cdots&(\varphi_{2m},\phi_1)_c(t)\\\vdots&&\vdots&\vdots&&\vdots\\
\varphi_{1,2m,1}(t)&\cdots&\varphi_{\tau_2,2m,1}(t)&(\varphi_{\tau_2+1},\phi_{2m})_c(t)
&\cdots&(\varphi_{2m},\phi_{2m})_c(t)\end{pmatrix},t\in[a,b].
\end{align*}
Then $\tilde \Gamma_c(\lambda_0)=\det(M\tilde\Phi(a)-N\tilde\Phi(b))\det\left((\varphi_1(c_0)\cdots \varphi_{2m}(c_0))^{-1}\right).$

Suppose that $\tilde \Gamma_c(\lambda_0)=0$. Then  there exist $c_i\in\mathbb{C}, 1\leq i\leq 2m$,  such that they are not all zeros and  $\sum_{i=1}^{2m}c_i\xi_i=0$, where $\xi_i, 1\leq i\leq 2m$, are the columns of $M\tilde\Phi(a)-N\tilde\Phi(b)$.

If $c_i=0$ for all $1\leq i\leq \tau_2$, then we get $\psi=\sum_{i=\tau_2+1}^{2m}c_i\varphi_i$ is a nontrivial solution of $(\ref{singular continuous linear Hamiltonian system})$ with $\lambda=\lambda_0$. Since $\sum_{i=\tau_2+1}^{2m}c_i\xi_i=0$, we have
\begin{align*}
M\begin{pmatrix}(\psi,\phi_1)_c(a)\\\vdots\\(\psi,\phi_{2m})_c(a)\end{pmatrix}-
N\begin{pmatrix}(\psi,\phi_1)_c(b)\\\vdots\\(\psi,\phi_{2m})_c(b)\end{pmatrix}=0.
\end{align*}
Thus $\psi\in\hat L^2_W(a,b)$ is an eigenfunction for $\lambda_0$. On the other hand, there exist $\tilde c_i\in\mathbb{C}$, $1\leq i\leq \tau_2$,  such that they are not all zeros and $\psi=\sum_{i=\tau_2+1}^{2m}c_i\varphi_i=\sum_{i=1}^{\tau_2}\tilde c_i\varphi_i$, which contradicts the linear independence of $\varphi_i$, $1\leq i\leq 2m$.

If $c_i, 1\leq i\leq \tau_2$, are not all vanished, then for any $\lambda\in\mathbb{C}$,
\begin{align}\label{Hamiltonian-equation-z}
Jz'(t,\lambda)=(P(t)+\lambda W(t))z(t,\lambda),\;t\in(a,b),
\end{align}
 where  $z(\cdot,\lambda)=\sum_{i=1}^{\tau_2}c_iy_{i,\lambda}(\cdot)+\sum_{i=\tau_2+1}^{2m}c_i(\lambda-\lambda_0)y_{i,\lambda}(\cdot)\in \hat L_W^2(a,b)$ is nontrivial for any fixed $\lambda\in\mathbb{C}$. Furthermore, $z(\cdot,\lambda_0)$ satisfies the boundary condition in (\ref{boundary condition continuous}).
 Differentiating (\ref{Hamiltonian-equation-z}) by $\lambda$, we have
 \begin{align*}
J\left({\partial_\lambda z}\right)'(t,\cdot)-P(t){\partial_\lambda z}(t,\cdot)= W(t)\left(z(t,\cdot)+\lambda {\partial_\lambda z}(t,\cdot)\right),\;t\in(a,b).
\end{align*}
 This yields that ${\partial_\lambda z}(\cdot,\lambda_0)$ is nontrivial, since otherwise, $W(t)z(t,\lambda_0)=0$ on $(a,b)$, which contradicts $z(\cdot,\lambda_0)$ is a nontrivial solution of (\ref{singular continuous linear Hamiltonian system}) with $\lambda=\lambda_0$ by (A).
Moreover, we have
\begin{align*}
&\int_a^b\partial_\lambda z(t,\lambda_0)^*W(t)\partial_\lambda z(t,\lambda_0)dt={1\over4\pi^2}\int_a^b\left(\int_{\gamma_\rho}{z(t,\lambda)\over(\lambda-\lambda_0)^2}d\lambda\right)^*W(t)
\left(\int_{\gamma_\rho}{z(t,\lambda)\over(\lambda-\lambda_0)^2}d\lambda\right)dt\\
=&{1\over4\pi^2}\int_a^b\left(\int_0^{2\pi}{z(t,\lambda_0+\rho e^{i\beta})i\rho e^{i\beta}\over\rho^2 e^{2i\beta}}d\beta\right)^*W(t)\left(\int_0^{2\pi}{z(t,\lambda_0+\rho e^{i\beta})i\rho e^{i\beta}\over\rho^2 e^{2i\beta}}d\beta\right)dt\\
=&{1\over4\pi^2}\int_0^{2\pi}\int_0^{2\pi}{1\over\rho^2e^{i(\beta_2-\beta_1)}}\langle z(\cdot,\lambda_0+\rho e^{i\beta_2}),z(\cdot,\lambda_0+\rho e^{i\beta_1})\rangle_c d\beta_1d\beta_2\\
\leq&{1\over\rho^2}\max_{\lambda\in\gamma_\rho}\|z(\cdot,\lambda)\|^2_c
\leq C({1\over\rho^2}+1)e^{C\rho}\max_{\lambda\in\gamma_\rho}\|Z(c_0,\lambda)\|_{2m}^2\max_{1\leq i\leq 2m}\|\phi_{i,\lambda_0}\|_c^2,
\end{align*}
where $C$ is a generic constant, $\gamma_\rho$ is a circle centred at $\lambda_0$ with radius $\rho>0$, $\|Z\|_{2m}=(\sum_{i=1}^{2m}\sum_{j=1}^{2m}|z_{ij}|^2)^{1\over2}$ for a matrix $Z$ with entries $z_{ij}$, and $(\phi_{1,\lambda},\cdots,\phi_{2m,\lambda})=(\phi_{1,\lambda_0},\cdots,\phi_{2m,\lambda_0})\cdot$ $Z(\cdot,\lambda)$. We refer the readers to the proof of Theorem 9.11.2 in \cite{Atkinson1964} for the last inequality above. This implies $\partial_\lambda z(\cdot,\lambda_0)\in \hat L_W^2(a,b)$.
Noticing  that $(\partial_\lambda z(\cdot,\lambda_0),\phi_j)_c=\sum_{i=1}^{\tau_2}c_i\varphi_{i,j,1}+\sum_{i=\tau_2+1}^{2m}c_i(\varphi_i,\phi_j)_c$
and $\sum_{i=1}^{2m}c_i\xi_i=0$, we get
\begin{align*}
M\begin{pmatrix}(\partial_\lambda z(\cdot,\lambda_0),\phi_1)_c(a)\\\vdots\\(\partial_\lambda z(\cdot,\lambda_0),\phi_{2m})_c(a)\end{pmatrix}-
N\begin{pmatrix}(\partial_\lambda z(\cdot,\lambda_0),\phi_1)_c(b)\\\vdots\\(\partial_\lambda z(\cdot,\lambda_0),\phi_{2m})_c(b)\end{pmatrix}=0.
\end{align*}
Hence, $\partial_\lambda z(\cdot,\lambda_0)$ is a generalized eigenfunction for $\lambda_0$, which contradicts that $H_D$ is self-adjoint.
 This completes the proof.\end{proof}

\section{Discrete Hamiltonian system: equality of multiplicities of eigenvalues}

In this section, we obtain the  relationship of  multiplicities of an eigenvalue of the singular discrete linear  Hamiltonian equation  (\ref{singular discrete linear Hamiltonian system}) with boundary condition in (\ref{boundary condition discrete}).

\begin{theorem}\label{discrete}
Assume that  $(\ref{singular discrete linear Hamiltonian system})$ is in the limit circle cases at $\pm\infty$.
Let $\lambda_*$ be an eigenvalue of   $\mathcal{T}_D$.
Then its analytic and geometric multiplicities are equal.
\end{theorem}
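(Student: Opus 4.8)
The plan is to transcribe the proof of Theorem~\ref{continuous} into the discrete setting, with two substitutions forced by the linear-relation framework: the self-adjoint operator $H_D$ is replaced by the self-adjoint linear relation $\mathcal{T}_D$, and the classical fact that a self-adjoint operator has no generalized eigenfunctions is replaced by Lemma~\ref{linear relation multiplicity}. Write $\tau_1,\tau_2$ for the analytic and geometric multiplicities of $\lambda_*$. First I would fix linearly independent eigenfunctions $\varphi_i$ ($1\le i\le\tau_2$), identified through $(B_1)$ and $(B_2)$ with nontrivial solutions of (\ref{singular discrete linear Hamiltonian system}) at $\lambda=\lambda_*$, and extend them to linearly independent solutions $\varphi_i$ ($1\le i\le 2m$). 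Letting $y_{i,\lambda}$ be the solution with $y_{i,\lambda}(0)=\varphi_i(0)$ and $\Psi_\lambda(n)=\big((y_{i,\lambda},\theta_j)_d(n)\big)_{j,i}$, one has $\Theta_\lambda(n)=\Psi_\lambda(n)(\varphi_1(0),\cdots,\varphi_{2m}(0))^{-1}$, hence
\begin{align*}
\Gamma_d(\lambda)=\det\big(M\Psi_\lambda(-\infty)-N\Psi_\lambda(+\infty)\big)\,\det\big((\varphi_1(0),\cdots,\varphi_{2m}(0))^{-1}\big).
\end{align*}
Since each $\varphi_i$ with $i\le\tau_2$ obeys the boundary condition in (\ref{boundary condition discrete}) and every entry $(y_{i,\lambda},\theta_j)_d(\pm\infty)$ is entire in $\lambda$, the first $\tau_2$ columns of $M\Psi_\lambda(-\infty)-N\Psi_\lambda(+\infty)$ carry the factor $\lambda-\lambda_*$; so $\Gamma_d(\lambda)=(\lambda-\lambda_*)^{\tau_2}\widetilde\Gamma_d(\lambda)$ with $\widetilde\Gamma_d$ entire, and the theorem reduces to showing $\widetilde\Gamma_d(\lambda_*)\neq0$.

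Suppose $\widetilde\Gamma_d(\lambda_*)=0$. As in the continuous proof, up to the nonzero factor $\det((\varphi_1(0),\cdots,\varphi_{2m}(0))^{-1})$ this is the determinant of the matrix whose columns $\xi_i$ are assembled from the first Taylor coefficients $\partial_\lambda(y_{i,\lambda},\theta_j)_d(\pm\infty)|_{\lambda_*}$ for $i\le\tau_2$ and from $(\varphi_i,\theta_j)_d(\pm\infty)$ for $i>\tau_2$, so there exist constants $c_i$, not all zero, with $\sum_{i=1}^{2m}c_i\xi_i=0$. If $c_i=0$ for all $i\le\tau_2$, then $\psi=\sum_{i=\tau_2+1}^{2m}c_i\varphi_i$ is a nontrivial solution, nonzero in $l^2_W(\mathbb{Z})$ by $(B_2)$, satisfying the boundary condition, hence an eigenfunction for $\lambda_*$; writing $\psi$ in terms of $\varphi_1,\dots,\varphi_{\tau_2}$ then contradicts the linear independence of $\varphi_1,\dots,\varphi_{2m}$.

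It remains to handle the case where some $c_i$ with $i\le\tau_2$ is nonzero. Here I would set $z(\cdot,\lambda)=\sum_{i=1}^{\tau_2}c_iy_{i,\lambda}+\sum_{i=\tau_2+1}^{2m}c_i(\lambda-\lambda_*)y_{i,\lambda}$, so $z(\cdot,\lambda_*)=\sum_{i=1}^{\tau_2}c_i\varphi_i$ is a nonzero eigenfunction. Differentiating (\ref{singular discrete linear Hamiltonian system}) in $\lambda$ gives $l(\partial_\lambda z(\cdot,\lambda))(n)=W(n)R\big(z(\cdot,\lambda)+\lambda\,\partial_\lambda z(\cdot,\lambda)\big)(n)$; once $\partial_\lambda z(\cdot,\lambda_*)\in\hat l^2_W(\mathbb{Z})$ is checked, via the discrete analog of the solution growth estimate used in the continuous proof (cf.\ \cite{Atkinson1964,Shi2006,Ren-Shi2011}), this means $\big(\partial_\lambda z(\cdot,\lambda_*),\,z(\cdot,\lambda_*)+\lambda_*\partial_\lambda z(\cdot,\lambda_*)\big)\in\mathcal{T}$, and $\sum_ic_i\xi_i=0$ together with $(\partial_\lambda z(\cdot,\lambda_*),\theta_j)_d(\pm\infty)=\sum_{i\le\tau_2}c_i\,\partial_\lambda(y_{i,\lambda},\theta_j)_d(\pm\infty)|_{\lambda_*}+\sum_{i>\tau_2}c_i(\varphi_i,\theta_j)_d(\pm\infty)$ shows the boundary condition holds, so the pair actually lies in $\mathcal{T}_D$. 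Thus $\big(\partial_\lambda z(\cdot,\lambda_*),z(\cdot,\lambda_*)\big)\in\mathcal{T}_D-\lambda_*I$, and since $\big(z(\cdot,\lambda_*),0\big)\in\mathcal{T}_D-\lambda_*I$ we obtain $\partial_\lambda z(\cdot,\lambda_*)\in N\big((\mathcal{T}_D-\lambda_*I)^2\big)$. By Lemma~\ref{linear relation multiplicity} this equals $N(\mathcal{T}_D-\lambda_*I)$, so $\big(\partial_\lambda z(\cdot,\lambda_*),\lambda_*\partial_\lambda z(\cdot,\lambda_*)\big)\in\mathcal{T}_D$; subtracting the two memberships in $\mathcal{T}_D$ forces $\big(0,z(\cdot,\lambda_*)\big)\in\mathcal{T}_D$, i.e.\ $z(\cdot,\lambda_*)\in\mathcal{T}_D(0)$. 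But $z(\cdot,\lambda_*)\in N(\mathcal{T}_D-\lambda_*I)=N((\mathcal{T}_D)_s-\lambda_*I)\subset D((\mathcal{T}_D)_s)\subset\mathcal{T}_D(0)^{\perp}$, so $z(\cdot,\lambda_*)=0$ in $l^2_W(\mathbb{Z})$, contradicting $(B_2)$. Hence $\widetilde\Gamma_d(\lambda_*)\neq0$ and $\tau_1=\tau_2$.

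The main obstacle, relative to Theorem~\ref{continuous}, is that $\mathcal{T}_D$ is multivalued, so ``self-adjoint has no generalized eigenfunctions'' is not available off the shelf and the contradiction must be produced within the linear-relation calculus; the delicate step is ruling out $z(\cdot,\lambda_*)\in\mathcal{T}_D(0)$, and this is exactly where the two-sided definiteness hypothesis $(B_2)$ — which gives both $z(\cdot,\lambda_*)\neq0$ in $l^2_W(\mathbb{Z})$ and the uniqueness of representatives underpinning the whole setup — does the essential work. A secondary technical point is the discrete solution growth estimate needed to place $\partial_\lambda z(\cdot,\lambda_*)$ in $\hat l^2_W(\mathbb{Z})$.
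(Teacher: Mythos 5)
Your proposal is correct and follows essentially the same route as the paper: the same factorization $\Gamma_d(\lambda)=(\lambda-\lambda_*)^{\tau_2}\widetilde\Gamma_d(\lambda)$ via the boundary-form matrix built from $y_{i,\lambda}$, the same two-case analysis of the relation $\sum_{i}c_i\xi_i=0$, the same auxiliary solution $z(\cdot,\lambda)$ with the Cauchy-integral estimate placing $\partial_\lambda z(\cdot,\lambda_*)$ in $\hat l^2_W(\mathbb{Z})$, and the same appeal to Lemma \ref{linear relation multiplicity}. The only divergence is at the very end: the paper asserts directly that $\partial_\lambda z(\cdot,\lambda_*)\notin N(\mathcal{T}_D-\lambda_* I)$, whereas you deduce $(0,z(\cdot,\lambda_*))\in\mathcal{T}_D$ and then rule this out via $\mathcal{T}_D(0)\cap\mathcal{T}_D(0)^{\perp}=\{0\}$ --- a slightly longer but fully justified (indeed, arguably more careful) way of extracting the same contradiction in the multivalued setting.
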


\begin{proof}
Denote the analytic and geometric multiplicities of $\lambda_*$ by $\mu_1$ and $\mu_2$, respectively. Let $\zeta_i$, $1\leq i\leq \mu_2$, be the linearly independent eigenfunctions for $\lambda_*$, and $\zeta_i$, $\mu_2+1\leq i\leq 2m$, be the solutions of (\ref{singular discrete linear Hamiltonian system}) with $\lambda=\lambda_*$ such that $\zeta_i$, $1\leq i\leq 2m$, are linearly independent.
Set $\chi_{i,\lambda}$ be the solutions of (\ref{singular discrete linear Hamiltonian system}) with $\lambda\in\mathbb{C}$ such that
$\chi_{i,\lambda}(0)=\zeta_i(0), 1\leq i\leq 2m$, and then $\chi_{i,\lambda_*}=\zeta_i$. For a fixed $n\in\mathbb{Z}\cup\{\pm\infty\}$ and any $1\leq i,j\leq 2m$, we have  $(\chi_{i,\lambda},\theta_j)_d(n)=\sum_{l=0}^\infty\zeta_{i,j,l}(n)(\lambda-\lambda_*)^l$ with $\zeta_{i,j,0}(n)=(\zeta_i,\theta_j)_d(n)$.
Let
\begin{align*}
\Omega_{\lambda}(n)=\begin{pmatrix}(\chi_{1,\lambda},\theta_1)_d(n)&\cdots&(\chi_{2m,\lambda},\theta_1)_d(n)\\
\vdots&&\vdots\\(\chi_{1,\lambda},\theta_{2m})_d(n)&\cdots&(\chi_{2m,\lambda},\theta_{2m})_d(n)\end{pmatrix},\; n\in\mathbb{Z}\cup\{\pm\infty\}.
\end{align*}
By the definition of $\Gamma_d$, we obtain
\begin{align*}
 &\Gamma_d(\lambda)=\det(M\Theta_{\lambda}(-\infty)-N\Theta_{\lambda}(+\infty))\\
 =&\det(M\Omega_{\lambda}(-\infty)-N\Omega_{\lambda}(+\infty))\det\left((\zeta_1(0)\cdots \zeta_{2m}(0))^{-1}\right).
\end{align*}
Since
\begin{align*}
M\begin{pmatrix}(\zeta_i,\theta_1)_d(-\infty)\\\vdots\\(\zeta_i,\theta_{2m})_d(-\infty)\end{pmatrix}-
N\begin{pmatrix}(\zeta_i,\theta_1)_d(+\infty)\\\vdots\\(\zeta_i,\theta_{2m})_d(+\infty)\end{pmatrix}=0, \;\;1\leq i\leq \mu_2,
\end{align*}
 the first $i$-th column  of $M\Omega_{\lambda}(-\infty)-N\Omega_{\lambda}(+\infty)$ contains the factor $\lambda-\lambda_*$.
Thus $\Gamma_d(\lambda)=(\lambda-\lambda_*)^{\mu_2}\tilde\Gamma_d(\lambda), \lambda\in\mathbb{C}$, where $\tilde \Gamma_d$ is an entire function of $\lambda$. Then we prove that  $\tilde\Gamma_d(\lambda_*)\neq0$. Define
\begin{align*}
\tilde \Theta(n)=\begin{pmatrix}\zeta_{1,1,1}(n)&\cdots&\zeta_{\mu_2,1,1}(n)&(\zeta_{\mu_2+1},\theta_1)_d(n)
&\cdots&(\zeta_{2m},\theta_1)_d(n)\\\vdots&&\vdots&\vdots&&\vdots\\
\zeta_{1,2m,1}(n)&\cdots&\zeta_{\mu_2,2m,1}(n)&(\zeta_{\mu_2+1},\theta_{2m})_d(n)
&\cdots&(\zeta_{2m},\theta_{2m})_d(n)\end{pmatrix},
\end{align*}
where $n\in\mathbb{Z}\cup\{\pm\infty\}$. Then $\tilde \Gamma_d(\lambda_*)=\det(M\tilde\Theta(-\infty)-N\tilde\Theta(+\infty))\det\left((\zeta_1(0)\cdots \zeta_{2m}(0))^{-1}\right).$

Suppose that $\tilde \Gamma_d(\lambda_*)=0$. Then  there exist $d_i\in\mathbb{C}, 1\leq i\leq 2m$,  such that they are not all zeros and  $\sum_{i=1}^{2m}d_i\eta_i=0$, where $\eta_i, 1\leq i\leq 2m$, are the columns of $M\tilde\Theta(-\infty)-N\tilde\Theta(+\infty)$.

If $d_i=0$ for all $1\leq i\leq \mu_2$, then we get $\tilde\psi=\sum_{i=\mu_2+1}^{2m}d_i\zeta_i$ is a nontrivial solution of $(\ref{singular discrete linear Hamiltonian system})$ with $\lambda=\lambda_*$ and $\sum_{i=\mu_2+1}^{2m}d_i\eta_i=0$. This gives
\begin{align*}
M\begin{pmatrix}(\tilde\psi,\theta_1)_d(-\infty)\\\vdots\\(\tilde\psi,\theta_{2m})_d(-\infty)\end{pmatrix}-
N\begin{pmatrix}(\tilde\psi,\theta_1)_d(+\infty)\\\vdots\\(\tilde\psi,\theta_{2m})_d(+\infty)\end{pmatrix}=0.
\end{align*}
Thus $\tilde\psi\in\hat l^2_W(\mathbb{Z})$ is an eigenfunction for $\lambda_*$ and there exist $\tilde d_i\in\mathbb{C}$, $1\leq i\leq \mu_2$, such that they are not all zeros and $\tilde\psi=\sum_{i=\mu_2+1}^{2m}d_i\zeta_i=\sum_{i=1}^{\mu_2}\tilde d_i\zeta_i$, which contradicts the linear independence of $\zeta_i$, $1\leq i\leq 2m$.

If $d_i, 1\leq i\leq \mu_2$, are not all zeros, then for any $\lambda\in\mathbb{C}$,
\begin{align}\label{Hamiltonian-equation-x}
J\Delta x(n,\lambda)=(P(n)+\lambda W(n))R(x)(n,\lambda),\;n\in\mathbb{Z},
\end{align}
 where  $x(\cdot,\lambda)=\sum_{i=1}^{\mu_2}d_i\chi_{i,\lambda}(\cdot)+\sum_{i=\mu_2+1}^{2m}d_i(\lambda-\lambda_*)\chi_{i,\lambda}(\cdot)\in \hat l_W^2(\mathbb{Z})$ is nontrivial for any fixed $\lambda\in\mathbb{C}$. In addition, $x$ satisfies the boundary condition in (\ref{boundary condition discrete}).
 We get by (\ref{Hamiltonian-equation-x}) that
 \begin{align*}
J\Delta\left({\partial_\lambda x}\right)(n,\cdot)-P(n)R({\partial_\lambda x})(n,\cdot)= W(n)R(\lambda\partial_\lambda x+x)(n,\cdot),\;n\in \mathbb{Z}.
\end{align*}
 Then ${\partial_\lambda x}(\cdot,\lambda_*)$ is nontrivial, since otherwise, $W(n)R(x)(n,\lambda_*)\equiv0$ on $\mathbb{Z}$, which contradicts
 $x(\cdot,\lambda_*)$ is nontrivial according to
(\ref{Hamiltonian-equation-x}) and ($B_2$).
Direct computation gives
\begin{align*}
&\sum_{n\in\mathbb{Z}}R(\partial_\lambda x)(n,\lambda_*)^*W(n)R(\partial_\lambda x)(n,\lambda_*)\\
=&{1\over4\pi^2}\sum_{n\in\mathbb{Z}}R\left(\int_{\gamma_\rho}{x(n,\lambda)\over(\lambda-\lambda_*)^2}d\lambda\right)^*W(n)
R\left(\int_{\gamma_\rho}{x(n,\lambda)\over(\lambda-\lambda_*)^2}d\lambda\right)\\
=&{1\over4\pi^2}\sum_{n\in\mathbb{Z}}\left(\int_0^{2\pi}{R(x)(n,\lambda_*+\rho e^{i\beta})\over\rho e^{i\beta}}d\beta\right)^*W(n)\left(\int_0^{2\pi}{R(x)(n,\lambda_*+\rho e^{i\beta})\over\rho e^{i\beta}}d\beta\right)\\
=&{1\over4\pi^2}\int_0^{2\pi}\int_0^{2\pi}{1\over\rho^2e^{i(\beta_2-\beta_1)}}\langle x(\cdot,\lambda_*+\rho e^{i\beta_2}),x(\cdot,\lambda_*+\rho e^{i\beta_1})\rangle_d d\beta_1d\beta_2\\
\leq&{1\over\rho^2}\max_{\lambda\in\gamma_\rho}\|x(\cdot,\lambda)\|_d^2\leq C({1\over\rho^2}+1)e^{C\rho}\max_{\lambda\in\gamma_\rho}\|X(0,\lambda)\|_{2m}^2\max_{1\leq i\leq 2m}\|\theta_{i,\lambda_*}\|_d^2,
\end{align*}
where $\gamma_\rho$ is a circle centred at $\lambda_*$ with radius $\rho$, and $(\theta_{1,\lambda},\cdots,\theta_{2m,\lambda})=(\theta_{1,\lambda_*},\cdots,\theta_{2m,\lambda_*})\cdot$ $X(\cdot,\lambda)$. The last inequality is due to the proof of Theorem 5.5 in \cite{Shi2006}.
This gives $\partial_\lambda x(\cdot,\lambda_*)\in \hat l_W^2(\mathbb{Z})$.
Since $(\partial_\lambda x(\cdot,\lambda_*),\theta_j)_d=\sum_{i=1}^{\mu_2}d_i\zeta_{i,j,1}+\sum_{i=\mu_2+1}^{2m}d_i(\zeta_i,\theta_j)_d$
and $\tilde \Gamma_d(\lambda_*)=0$, we have
\begin{align*}
M\begin{pmatrix}(\partial_\lambda x(\cdot,\lambda_*),\theta_1)_d(-\infty)\\\vdots\\(\partial_\lambda x(\cdot,\lambda_*),\theta_{2m})_d(-\infty)\end{pmatrix}-
N\begin{pmatrix}(\partial_\lambda x(\cdot,\lambda_*),\theta_1)_d(+\infty)\\\vdots\\(\partial_\lambda x(\cdot,\lambda_*),\theta_{2m})_d(+\infty)\end{pmatrix}=0.
\end{align*}
Thus, we conclude that $0\neq\partial_\lambda x(\cdot,\lambda_*)\in N((\mathcal{T}_D-\lambda_* I)^2)\setminus N(\mathcal{T}_D-\lambda_* I)$ is a generalized eigenfunction for $\lambda_*$. On the other hand, since $\mathcal{T}_D$ is a self-adjoint linear relation, we get by Lemma \ref{linear relation multiplicity} that $N(\mathcal{T}_D-\lambda_* I)=N((\mathcal{T}_D-\lambda_* I)^2)$, which is a contradiction.

Hence, $\mu_1=\mu_2$ and this completes the proof.\end{proof}

\section*{Acknowledgement}
 H. Zhu is partially supported  by  PITSP (No. BX20180151) and CPSF (No. 2018M630266).

\end{CJK*}


\begin{thebibliography}{99}

\bibitem {Arens1961} R. Arens, Operational calculus of linear realtions, Pacific J. Math., 11 (1961), 9--23.
\bibitem {Atkinson1964} F.V. Atlinson, Discrete and Continuous Boundary Problems, Academic Press, Inc., New York, 1964.

\bibitem {Dijksma-Snoo1985} A. Dijksma, H.S.V. de Snoo, Eigenfunction expansions associated with pairs of ordinary differential expressions, J. Differential Equations, 60 (1985), 21--56.
  \bibitem {Eastham1999} M. S. P. Eastham, Q. Kong, H. Wu, A. Zettl, Inequalities among eigenvalues of Sturm-Liouville problems,
J. Inequalities and Appl., 3 (1999), 25--43.
\bibitem{Hu-Liu-Wu-Zhu2018}  X. Hu, L. Liu, L. Wu, H. Zhu, Singularity of the $n$-th eigenvalue of  high dimensional Sturm-Liouville problems, arXiv:1805.00253.
\bibitem{Hu-Wang2011}   X. Hu, P. Wang, Conditional Fredholm determinant for the S-periodic orbits in Hamiltonian systems, J. Funct. Anal.,  261  (2011),  3247--3278.
\bibitem{Hu-Wang20161}   X. Hu, P. Wang, Hill-type formula and Krein-type trace formula for S-periodic solutions in ODEs, Discrete Contin. Dyn. Syst.,  36  (2016),   763--784.
 \bibitem{Hu-Wang20162}   X. Hu, P. Wang,  Eigenvalue problem of Sturm-Liouville systems with separated boundary conditions, Math. Z.,  283  (2016), 339--348.
  \bibitem{Kong2000} Q. Kong, H. Wu, A. Zettl, Geometric aspects of Sturm-Liouville
problems, I. Structures on spaces of boundary conditions,
Proc. Roy. Soc. Edinburgh Sect. A, 130 (2000), 561--589.

\bibitem{Kong2004} Q. Kong, H. Wu, A. Zettl, Multiplicity of Sturm-Liouville eigenvalues,
J. Comput. Appl. Math., 171 (2004), 291--309.
\bibitem{Naimark1968}
M. A. Naimark, Linear Differential Operators, Ungar, New York, 1968.
\bibitem {Ren-Shi2011} G. Ren, Y. Shi, Defect indices and definiteness conditions for a class of discrete linear Hamiltonian systems, Appl. Math. Comput., 218 (2011), 3414--3429.
\bibitem {Ren-Shi2014} G. Ren, Y. Shi, Self-adjoint extensions for discrete linear Hamiltonian systems, Linear Algebra Appl., 454 (2014), 1--48.
\bibitem{Shi2010}
D. Shi, Z. Huang, Relationships of multiplicities of eigenvalues of a higher-order ordinary differential operator,
Acta Math. Sinica (Chin. Ser.),  53  (2010), 763--772.
\bibitem {Shi2004} Y. Shi, On the rank of the matrix radius of the limiting set for a singular linear Hamiltonian system, Linear Algebra Appl.,  376  (2004), 109--123.
 \bibitem {Shi2006} Y. Shi, Weyl-Titchmarsh theory for a class of discrete linear Hamiltonian systems,  Linear Algebra Appl.,  416  (2006), 452--519.
\bibitem {Shi-Shao-Ren2013} Y. Shi, C. Shao, G. Ren,     Spectral properties of self-adjoint subspaces, Linear Algebra Appl., 483 (2013), 191--218.
\bibitem {Sun-Shi2010} H. Sun, Y. Shi, Self-adjoint extensions for linear Hamiltonian systems with two singular endpoints, J. Func. Anal., 259 (2010), 2003--2027.
 \bibitem{Wang2005}
 Z. Wang, H. Wu, Equalities of multiplicities of a Sturm-Liouville eigenvalue, J. Math. Anal. Appl.,  306  (2005), 540--547.
\bibitem{Zhu2016}
 H. Zhu, S. Sun, Y. Shi, H. Wu, Dependence of eigenvalues of certain closely discrete Sturm-Liouville problems, Complex Anal. Oper. Theory, 10 (2016), 667--702.

\end{thebibliography}
\end{document}